\theoremstyle{plain}
\newtheorem{thrm}{Theorem}
\newtheorem{thm}{Theorem}[section]
\newtheorem{prop}[thm]{Proposition}
\newtheorem{cor}[thm]{Corollary}
\newtheorem{lem}[thm]{Lemma}
\theoremstyle{definition}
\newtheorem{rem}[thm]{Remark}
\numberwithin{equation}{section}
\newcommand{\Z}{\mathbb{Z}}
\newcommand{\Q}{\mathbb{Q}}
\newcommand{\Sym}{\mathfrak S}
\newcommand{\hooklongrightarrow}{\lhook\joinrel\longrightarrow}
\DeclareMathOperator{\rk}{rk}
\DeclareMathOperator{\gr}{gr}
\DeclareMathOperator{\Lie}{\mathcal L}
\DeclareMathOperator{\Der}{Der}
\DeclareMathOperator{\End}{End}
\DeclareMathOperator{\Hom}{Hom}
\DeclareMathOperator{\Aut}{Aut}
\begin{document}
\title{The Andreadakis Problem for the McCool groups}
\author{Jaques Darn\'{e}}
\address{Jaques Darn\'{e}; LAMFA, Universit\'{e} de Picardie Jules Verne}
\email{jacques.darne@u-picardie.fr} 
\author{Naoya Enomoto}
\address{Naoya Enomoto; The University of Electro-Communications}
\email{enomoto-naoya@uec.ac.jp}
\author{Takao Satoh}
\address{Takao Satoh; Department of Mathematics, Faculty of Science Division II, Tokyo University of Science}
\email{takao@rs.tus.ac.jp}
\begin{abstract}
In this short paper, we show that the McCool group does not satisfy the Andreadakis equality from degree $7$, and we give a lower bound for the size of the difference between the two relevant filtrations.
As a consequence, we see that the Andreadakis problem for the McCool group does not stabilize.
\end{abstract}
\maketitle


\section{Introduction}

Let $G$ be a group, and $\mathrm{Aut}(G)$ its automorphism group.
Let $G=\Gamma_1(G) \supset \Gamma_2(G) \supset \cdots$ be the lower central series of $G$.
The action of $\mathrm{Aut}(G)$ on the $k$-th nilpotent quotient group $G/\Gamma_{k+1}(G)$ corresponds to a homomorphism
$\mathrm{Aut}(G) \rightarrow \mathrm{Aut}\,(G/\Gamma_{k+1}(G))$, whose kernel is denoted by $\mathcal{A}_k(G)$,
or simply $\mathcal{A}_k$.
This gives a descending filtration
\[ \mathrm{Aut}(G) \supset \mathcal{A}_1(G) \supset \mathcal{A}_2(G) \supset \cdots \]
of $\mathrm{Aut}\,G$, called the \textit{Andreadakis-Johnson filtration} of $\mathrm{Aut}(G)$.
The first subgroup $\mathcal{A}_1(G)$ is called the
IA-automorphism group of $G$, and is denoted by $\mathrm{IA}(G)$.
Andreadakis \cite{And} showed that this filtration is an N-series. 
Hence, each of the graded quotient $\gr_k(\mathcal{A}_*(G)):=\mathcal{A}_k(G)/\mathcal{A}_{k+1}(G)$
is an abelian group, and their direct sum $\gr(\mathcal{A}_*(G))$ is a graded Lie algebra.
Since the Andreadakis-Johnson filtration is an N-series we have $\Gamma_k(\mathrm{IA}(G)) \subset \mathcal{A}_k(G)$
for any $k \geq 1$.
It is a natural question to ask: What is the difference between $\Gamma_k(\mathrm{IA}(G))$ and $\mathcal{A}_k(G)$?
The question of whether or not $\Gamma_k(\mathrm{IA}(G))=\mathcal{A}_k(G)$ for any $k \geq 1$ is called the \textit{Andreadakis problem} for
$\mathrm{Aut}(G)$.

\medskip

Andreadakis was mostly interested in the free group case.
Let $F_n$ be the free group of rank $n$ with basis $x_1, \ldots, x_n$.
Andreadakis \cite{And} showed that each of $\mathcal{A}_k(F_n)/\mathcal{A}_{k+1}(F_n)$ is a free abelian group of finite rank,
and gave its rank for $k=1$ by using Magnus's generators of $\mathrm{IA}(F_n)$.
He also showed that $\Gamma_k(\mathrm{IA}(F_2))=\mathcal{A}_k(F_2)$ for every $k \geq 1$, and
$\Gamma_k(\mathrm{IA}(F_3)) = \mathcal{A}_k(F_3)$ for $1 \leq k \leq 3$.
Then he conjectured that $\Gamma_k(\mathrm{IA}(F_n)) =\mathcal{A}_k(F_n)$ for any $n \geq 3$ and $k \geq 1$.
Today, this is known as \textit{the Andreadakis conjecture}.
Bachmuth \cite{Ba} showed that $\Gamma_2(\mathrm{IA}(F_n)) =\mathcal{A}_2(F_n)$ for all $n \geq 2$.
This fact is also obtained from the independent works by 
Cohen-Pakianathan \cite{CP1, CP2}, Farb \cite{Fa} and Kawazumi \cite{Ka} who determined the abelianization of $\mathrm{IA}(F_n)$
by using the first Johnson homomorphism.
Satoh \cite{S2} showed that $\Gamma_3(\mathrm{IA}(F_n)) =\mathcal{A}_3(F_n)$ for every $n \geq 3$ by using the second Johnson
homomorphism and straightforward calculations.
In general, however, the Andreadakis conjecture does not hold.
Bartholdi \cite{LB1, LB2} showed that
$\mathcal{A}_{4}(F_3)/\Gamma_4(\mathrm{IA}(F_3)) \cong (\Z/2\Z)^{\oplus 14} \oplus (\Z/3\Z)^{\oplus 9}$ and
$\mathcal{A}_{5}(F_3)/\Gamma_5(\mathrm{IA}(F_3)) \otimes_{\Z} \Q \cong \Q^{\oplus 3}$, using computer calculations.
For a general $n \geq 4$, the conjecture is still open. In particular, it is open in the stable range (for $n \gg k$). In fact, the natural map 
$\Gamma_k(\mathrm{IA}(F_n))/\Gamma_{k+1}(\mathrm{IA}(F_n)) \rightarrow \mathcal{A}_{k}(F_n)/\mathcal{A}_{k+1}(F_n)$ has been shown to be surjective for $n \geq k+2$, rationally by Massuyeau and Sakasai~\cite{MS}, and over the integers by Darn\'{e}~\cite{Da1}.

\medskip

We can consider the restriction of the Andreadakis problem to subgroups of $\mathrm{IA}(G)$, for a given group $G$.
Precisely, given a subgroup $N$ of $\mathrm{IA}(G)$, we can ask whether or not the inclusions $\Gamma_k(N) \subset \mathcal{A}_k(G) \cap N$ are equalities.
The Andreadakis problem has been studied for several subgroups of $\mathrm{Aut}(F_n)$.
One of the most important subgroups is the Torelli subgroup $\mathcal{I}_{g,1}$ of the mapping class group
of a compact oriented surface of genus $g$ with one boundary component. It is considered as a subgroup of $\mathrm{Aut}(F_{2g})$
thanks to Dehn and Nielsen's classical work. Johnson \cite{Jo} determined the
abelianization of the Torelli group, and showed that it has many direct summands isomorphic to
$\Z/2\Z$ by using the Birman-Craggs homomorphism. From this, it immediately follows that
$\Gamma_2(\mathcal{I}_{g,1}) \subsetneqq \mathcal{A}_2(F_{2g}) \cap \mathcal{I}_{g,1}$.
Morita \cite{Mo} showed that
$(\mathcal{A}_3(F_{2g}) \cap \mathcal{I}_{g,1})/\Gamma_3(\mathcal{I}_{g,1})) \otimes_{\Z} \Q$ is not trivial for $g \geq 3$
by using the Casson invariants. Thus, the Andreadakis problem for the Torelli group never holds.
However, $\mathrm{Aut}(F_n)$ contains other subgroups of topological significance. First among these is the braid group $B_n$ on $n$-strands,
which embeds into $\mathrm{Aut}(F_n)$ via the \textit{Artin action} of $B_n$ on $F_n$, a fact that has been known since the introduction of the braid group by Artin~\cite{Art}.
The subgroup $\mathcal{A}_1(F_n) \cap B_n$ is the pure braid group, denoted by $P_n$.
Darn\'{e} \cite{Da2} proved that $P_n$ satisfies the Andreadakis equality, that is: 
$\Gamma_k(P_n) = \mathcal{A}_k(F_n) \cap P_n$ for any $n \geq 2$ and $k \geq 1$.

\medskip

In this paper, we study another subgroup of topological significance, which is the \textit{basis-conjugating automorphism group} of $F_n$.
An automorphism of $F_n$ is called \textit{basis-conjugating} (with respect to our fixed basis $x_1, ..., x_n$) if it sends each element $x_i$ to one of its conjugates. Such automorphisms form a subgroup $\mathrm{P}\Sigma_n$ of $\mathrm{IA}(F_n)$. It identifies with the~\textit{pure loop braid group} (also called the~\textit{pure welded braid group}), acting faithfully on $F_n$ via the Artin action~\cite{Da}.
Since McCool \cite{McC} gave the first finite presentation for $\mathrm{P}\Sigma_n$, it is also called the McCool group. 
In \cite{ES}, the second and third authors proved that $\Gamma_4(\mathrm{P}\Sigma_n)=\mathcal{A}_4(F_n) \cap \mathrm{P}\Sigma_n$ 
for any $n \geqq 3$.
Our main theorem is the following.
\begin{thrm}\label{main_thm}
For any $n \geq 3$, $\mathrm{P}\Sigma_n$ does not satisfy the Andreadakis equality. In fact:
\[\text{The quotient } \frac{\mathcal{A}_7(F_n) \cap \mathrm{P}\Sigma_n}{\Gamma_7(\mathrm{P}\Sigma_n)} \text{ has a central subgroup which is free abelian of rank } \binom{n}{3}.\]
\end{thrm}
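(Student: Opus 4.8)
The plan is to recast the statement as a computation in graded Lie algebras. Write $\mathcal{L}=\gr(F_n)$ for the free Lie ring on $\overline{x}_1,\dots,\overline{x}_n$, set $H=\mathcal{L}_1=H_1(F_n)$, and let $\mathfrak{h}=\bigoplus_k\mathfrak{h}_k$ denote the Lie algebra of \emph{tangential derivations} of $\mathcal{L}$, namely the derivations $d$ with $d(\overline{x}_i)=[\overline{x}_i,a_i]$ for some $a_i\in\mathcal{L}_k$. The Johnson homomorphisms assemble into a morphism of graded Lie algebras $\tau\colon\gr(\mathrm{P}\Sigma_n)\to\mathfrak{h}$, sending the class of $\chi_{ij}$ to the derivation $\overline{x}_i\mapsto[\overline{x}_i,\overline{x}_j]$ (other generators fixed); it factors through the injection $\gr(\mathcal{A}_*(F_n)\cap\mathrm{P}\Sigma_n)\hookrightarrow\mathfrak{h}$ coming from the injectivity of Johnson on the Andreadakis-graded. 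The single fact I will use is the following: for $w\in\Gamma_6(\mathrm{P}\Sigma_n)$ one has $w\in\mathcal{A}_7(F_n)$ if and only if $\tau_6$ annihilates the class of $w$ in $\gr_6(\mathrm{P}\Sigma_n)$, the lower Johnson images vanishing automatically because $w\in\Gamma_6\subseteq\mathcal{A}_6$.

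First I would, for each $3$-element subset $\{i,j,k\}$, write down an explicit iterated commutator $w_{ijk}$ of length $6$ in the McCool generators $\chi_{\bullet\bullet}$, so that $w_{ijk}\in\Gamma_6(\mathrm{P}\Sigma_n)$ by construction, and chosen to be trilinear (alternating) in the three index-directions $i,j,k$. Second, I would compute the degree-$6$ tangential derivation $\tau_6(w_{ijk})$ and check that it vanishes identically; by the criterion above this places $w_{ijk}$ in $\mathcal{A}_7(F_n)\cap\mathrm{P}\Sigma_n$. Centrality is then free: since $w_{ijk}\in\Gamma_6$, we have $[w_{ijk},g]\in[\Gamma_6,\Gamma_1]\subseteq\Gamma_7$ for every $g\in\mathrm{P}\Sigma_n$, so the class of $w_{ijk}$ is central in $(\mathcal{A}_7\cap\mathrm{P}\Sigma_n)/\Gamma_7$, and likewise $[w_{ijk},w_{i'j'k'}]\in[\Gamma_6,\Gamma_6]\subseteq\Gamma_7$, so the subgroup $W$ generated by the $w_{ijk}$ is abelian.

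It then remains to prove that these $\binom{n}{3}$ classes span a free abelian group of rank $\binom{n}{3}$. Because $W$ is abelian, is generated by $\binom{n}{3}$ elements, and sits inside $\Gamma_6/\Gamma_7=\gr_6(\mathrm{P}\Sigma_n)$ (as each $w_{ijk}\in\mathcal{A}_7\cap\Gamma_6$), it suffices to show that the classes $\{w_{ijk}\}$ remain $\Q$-linearly independent in $\gr_6(\mathrm{P}\Sigma_n)\otimes\Q$; independence then forces $W\cong\Z^{\binom{n}{3}}$. For this I would use the functoriality of $\gr(\mathrm{P}\Sigma_\bullet)$ under the strand inclusions and retractions $\mathrm{P}\Sigma_S\rightleftarrows\mathrm{P}\Sigma_n$ for $S\subseteq\{1,\dots,n\}$: the retraction onto the strands $\{i,j,k\}$ kills $w_{i'j'k'}$ whenever $\{i',j',k'\}\neq\{i,j,k\}$ — this is precisely the role of the trilinearity — which reduces the whole independence statement to the single assertion that $w_{123}\neq0$ in $\gr_6(\mathrm{P}\Sigma_3)\otimes\Q$. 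Equivalently, the equivariance of the construction shows that the $\Z$-span of the $w_{ijk}$ is a $\mathrm{GL}_n(\Z)$-submodule isomorphic to $\wedge^3 H$, which is nonzero as soon as one of its members is.

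I expect the decisive work to be this $n=3$ computation: exhibiting a single length-$6$ Lie word that is simultaneously annihilated by the degree-$6$ Johnson homomorphism (so that it lies deep in the Andreadakis filtration) and nonzero in $\gr_6(\mathrm{P}\Sigma_3)\otimes\Q$. Both halves demand precise control of the degree-$6$ part of the McCool Lie algebra through its quadratic (infinitesimal McCool) presentation, and the two constraints pull in opposite directions, so locating a word satisfying both is the crux. I would carry out the bookkeeping by decomposing the relevant degree-$6$ pieces into $\mathrm{GL}_n(\Z)$-isotypic components and isolating the $\wedge^3 H$ summand, confirming the $n=3$ seed by direct calculation — in the worst case computer-assisted, in the spirit of Bartholdi's computations for $F_3$.
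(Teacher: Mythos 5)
Your overall architecture is the same as the paper's: a degree-$6$ seed element for $n=3$ lying in $\ker\tau_6$ but non-trivial in $\gr_6(\mathrm P\Sigma_3)$, propagated to general $n$ by the split injections $\iota_I$ and strand-forgetting retractions $\pi_I$ indexed by $3$-element subsets, with $\pi_J\circ\iota_I$ killing the seed for $J\neq I$ (exactly the paper's key lemma), and with centrality coming for free from $[\Gamma_6,\Gamma_1]\subseteq\Gamma_7$ (exactly the paper's final argument via formula \eqref{eq_JohnsonKernel}). The genuine gap is precisely at the point you yourself flag as ``the decisive work'': no candidate element is ever produced, and nothing in your setup guarantees that one exists. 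The existence of a length-$6$ Lie word in the McCool generators that is simultaneously killed by $\tau_6$ and non-zero in $\gr_6(\mathrm P\Sigma_3)$ \emph{is} the content of the theorem --- the kernel $\kappa_k(\mathrm P\Sigma_3)$ is in fact zero for all $k\leq 5$, so these two constraints are only barely compatible --- and deferring it to an unspecified ``direct calculation'' leaves the statement unproved. The paper's proof consists largely of exhibiting the explicit element $\varpi$ (a sum of twelve left-normed brackets in $a=\overline{K_{12}}$, $b=\overline{K_{21}}$, $c=\overline{K_{13}}$) and verifying both properties.

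Moreover, the verification scheme you sketch would not close as stated. To certify non-vanishing of your $w_{123}$ in $\gr_6(\mathrm P\Sigma_3)\otimes\Q$ you need actual knowledge of $\gr_6(\mathrm P\Sigma_3)$, and you propose to compute in ``the quadratic (infinitesimal McCool) presentation.'' But a priori one only has a surjection from that quadratic Lie algebra onto $\gr(\mathrm P\Sigma_3)$ (the images of the McCool relations do hold there), and non-vanishing does not descend along a surjection; whether the surjection is injective in degree $6$ is exactly the sort of delicate question this circle of problems is about. The paper avoids this by a structural input your plan lacks: the quotient of $\mathrm P\Sigma_3$ by the inner automorphisms is free, so $\mathrm P\Sigma_3\cong F_3\rtimes F_3$ is an almost-direct product, and Falk--Randell gives $\Lie_*(\mathrm P\Sigma_3)\cong \mathbb L[3]\rtimes\mathbb L[3]$; combined with Proposition~\ref{kappa_in_g} ($\ker\tau\subseteq\mathfrak g$, the \emph{free} factor), non-vanishing reduces to a check in a free Lie algebra, done by isolating a single non-zero coefficient ($c^2b^2a^2$) in the tensor algebra, while $\tau(\varpi)=0$ is a finite computation. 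Two smaller points: your aside that the span of the $w_{ijk}$ is a $\mathrm{GL}_n(\Z)$-submodule isomorphic to $\wedge^3 H$ is unfounded, since only the symmetric group, not $\mathrm{GL}_n(\Z)$, acts on $\mathrm P\Sigma_n$ (this is harmless, as your retraction argument already gives independence); and the ``alternating'' requirement on $w_{ijk}$ is not needed --- what the retraction argument uses is only that every bracket in the word involves generators touching all three strands, so that forgetting any strand kills it.
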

This relies on the determination of a counter-example for $n = 3$, which (contrary to what happens in the case of $\mathrm{IA}(F_n)$ and the stable problem mentioned above) propagates to higher values of $n$.

\section{Johnson homomorphisms}
\label{sec_Johnson}

In this section, we recall the definition of the Johnson homomorphism for subgroups of $\mathrm{Aut}(F_n)$, and we recall how to reformulate the Andreadakis problem in terms of this morphism. We also introduce tangential derivations.

\vspace{0.5em}

For a group $G$, denote by $\Lie_k(G) := \Gamma_k(G)/\Gamma_{k+1}(G)$ the $k$-th graded quotient of the lower central series of $G$,
and by $\Lie_*(G) := {\bigoplus}_{k \geq 1} \Lie_k(G)$ the associated graded sum.
The graded sum $\Lie_*(G)$ is endowed with a natural graded Lie algebra structure induced by the commutator bracket in $G$.
It is always generated by its degree one part $\Lie_1(G) = G^{ab}$. If $G$ is the free group $F_n$, then $\Lie_*(F_n)$ is the free Lie algebra on $H = F_n^{ab} \cong \Z^n$, denoted by $\mathbb L[n]$. If $k \geq 1$, its degree $k$ part is denoted by $\mathbb L_k[n]$.




Let us recall some facts about the derivation algebra of the free Lie algebra $\mathbb L[n]$.
If $\mathfrak g$ is a Lie algebra, then its Lie algebra of derivations is: 
\[ \Der(\mathfrak g) := \{ f \in \End_\Z(\mathfrak g)
     \ |\ f([X,Y]) = [f(X),Y]+ [X,f(Y)] \,\, \text{for} \,\, X,Y \in \mathfrak g  \}. \]
When $\mathfrak g = \mathbb L[n]$ (which is free on $H = \Z^n$), then derivations (which correspond to sections of the canonical projection $\mathfrak g \rtimes \mathfrak g \twoheadrightarrow \mathfrak g$) are determined by their restriction to $H$. 
In particular, if $\Der_k(\mathbb L[n])$ denotes the subset of derivations raising the degree by $k \geq 0$, we have
\begin{align*} 
\Der_k(\mathbb L[n]) &= \{ f \in \Der(\mathbb L[n]) \ |\ f(x) \in \mathbb L_{k+1}[n] \,\,\text{for} \,\, x \in H \} \\
&\cong \Hom_\Z(H,\mathbb L_{k+1}[n]) \cong  H^* \otimes_\Z \mathbb L_{k+1}[n].
\end{align*}
and $\Der(\mathbb L[n])$ is a graded Lie algebra, for the decomposition:
\[ \Der(\mathbb L[n]) = \bigoplus_{k \geq 0} \Der_k(\mathbb L[n]) \cong \bigoplus_{k \geq 0} H^* \otimes_\Z \mathbb L_{k+1}[n].\]

The free Lie algebra $\mathbb L[n]$ injects into its universal enveloping algebra, which is the \textit{tensor algebra} $\mathbb T[n]$ (over~$\Z$), also known as the free associative ring on $n$ generators (that is, on $H = \Z^n$). Derivations of $\mathbb L[n]$ extend uniquely to \textit{derivations of the associative algebra} $\mathbb T[n]$, which are $\Z$-linear endomorphisms $f$ of $\mathbb T[n]$ satisfying $f(uv) = f(u)v + uf(v)$ for every $u,v \in \mathbb T[n]$. Indeed, derivations of $\mathbb T[n]$ are also uniquely determined by their restriction to $H$, so the inclusion of $\Der(\mathbb L[n])$ into $\Der(\mathbb T[n])$ can be seen as the direct sum of the inclusions of $H^* \otimes_\Z \mathbb L_{k+1}[n]$ into $H^* \otimes_\Z H^{\otimes k+1}$, over $k \geq 0$.



\medskip

Recall that we write simply $\mathcal A_*$ for the usual Andreadakis filtration $\mathcal A_*(F_n)$, and $\gr_k(\mathcal A_*)$ for $\mathcal A_k/\mathcal A_{k+1}$. The graded sum $\gr(\mathcal A_*) := \bigoplus_{k \geq 1} \gr_k (\mathcal A_*)$ is endowed with the Lie algebra structure whose Lie bracket is induced by commutators in $\Aut(F_n)$.
It is well-known that there is a well-defined injective morphism of graded Lie algebra
\[\tilde\tau : \gr(\mathcal A_*) \hooklongrightarrow \Der(\mathbb L[n]),\] described as follows: if $\overline \sigma$ denotes the class in $\gr_k(\mathcal A_*)$ of some $\sigma \in \mathcal A_k(F_n)$, and $\overline x$ denotes the class in $\Lie_j(F_n) \cong \mathbb L_j[n]$ of an element $x \in \Gamma_j(F_n)$,
\[\tilde\tau_k : \overline \sigma\ \longmapsto\ \left(\overline x \mapsto \overline{x^{-1} x^{\sigma}}\right) \in \Der_k(\mathbb L[n]) \cong H^* \otimes_\Z \mathbb L_{k+1}[n].\]
Notice that although the formula works for every $j \geq 1$, the latter isomorphism remembers only the case $j = 1$, which is the effect of the derivation on $H$.

\medskip

If $N$ is a subgroup of $\mathrm{IA}(F_n)$, then the canonical morphism from $\mathcal A_k \cap N$ to $\gr_k(\mathcal A_*)$ has $\mathcal A_{k+1}\cap N$ as its kernel, so $\gr(\mathcal A_* \cap N)$ injects into $\gr(\mathcal A_*)$. The restriction of $\tilde\tau$ to this Lie subalgebra $\gr(\mathcal A_* \cap N)$ is denoted by $\tilde\tau^N$. The inclusions $\Gamma_k(N) \subseteq \mathcal A_k \cap N$ induce a morphism of graded Lie algebras:
\[\iota_*^N: \Lie_*(N) \rightarrow \gr(\mathcal A_* \cap N).\]
We denote by $\tau^N$ the composition $\tilde\tau^N \circ \iota_*^N$. We sum this up in the commutative diagram:
\[
\xymatrix{
\Lie_k(N) \ar[r]^{\iota_*^N} \ar@/_1.3pc/[rr]_{\tau_k^N}
&\gr(\mathcal A_* \cap N)  \ar@{^{(}->}[r]^{\tilde\tau_k^N}
&\Der_k(\mathbb L[n])
}
\]
When $N = \mathrm{IA}(F_n)$, the homomorphisms $\tilde\tau_k$ and $\tau_k$ are the usual \textit{Johnson homomorphisms} of $\mathrm{Aut}(F_n)$. We will still use this name for $\tilde\tau_k^N$ and $\tau_k^N$, and we reserve the right to loose the superscript and the indices whenever convenient.

\medskip

Since $\tilde\tau_k^N$ is injective, we have $\ker(\iota_k^N) = \ker(\tau_k^N)$. We denote this kernel by $\kappa_k(N)$. We easily see that:
\begin{equation}\label{eq_JohnsonKernel}
\kappa_k(N) = \ker\left(\iota_k^N: \Gamma_k(N)/\Gamma_{k+1}(N) \rightarrow \mathcal A_k/\mathcal A_{k+1}\right) = (\Gamma_k(N) \cap \mathcal A_{k+1})/\Gamma_{k+1}(N).
\end{equation}
This is why $\kappa_k(N) = \ker(\tau_k^N)$ is relevant to the study of the Andreadakis equality for $N$
(whence our choice of the simpler notation $\tau_k$, and not $\tilde\tau_k$, for this Johnson homomorphism). 
In fact, we can reformulate the Andreadakis problem for $N$ using $\tau_k^N$ as follows: 
\begin{prop}\label{P-And}
Fix $k_0 \geq 1$. Then $\Gamma_k(N) = \mathcal A_k \cap N$ for all $k \leq k_0$ if and only if $\tau_k^N$ is injective for all $k < k_0$.
\end{prop}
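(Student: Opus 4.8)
The plan is to reduce everything to the kernel computation already recorded in \eqref{eq_JohnsonKernel}. Since $\tilde\tau_k^N$ is injective, $\tau_k^N$ is injective precisely when $\kappa_k(N) = 0$, and by \eqref{eq_JohnsonKernel} this is in turn equivalent to the condition
\[ (*)_k: \quad \Gamma_k(N) \cap \mathcal A_{k+1} = \Gamma_{k+1}(N). \]
Thus the proposition becomes a purely formal statement about the interaction of the two filtrations: the equalities $\Gamma_k(N) = \mathcal A_k \cap N$ for all $k \leq k_0$ hold if and only if $(*)_k$ holds for all $k < k_0$. I would prove the two implications separately.

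For the forward direction, assuming the Andreadakis equalities up to degree $k_0$, I would fix $k < k_0$ and simply manipulate intersections. Since $\Gamma_k(N) \subseteq N$, one has $\Gamma_k(N) \cap \mathcal A_{k+1} = \Gamma_k(N) \cap (\mathcal A_{k+1} \cap N)$; replacing $\mathcal A_{k+1} \cap N$ by $\Gamma_{k+1}(N)$ (legitimate because $k+1 \leq k_0$) and using $\Gamma_{k+1}(N) \subseteq \Gamma_k(N)$ yields $(*)_k$ at once. This implication is immediate.

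For the converse, assuming $(*)_k$ for all $k < k_0$, I would establish $\Gamma_k(N) = \mathcal A_k \cap N$ for $k \leq k_0$ by induction on $k$. The base case $k = 1$ is the tautology $\Gamma_1(N) = N = \mathcal A_1 \cap N$, using $N \subseteq \mathrm{IA}(F_n) = \mathcal A_1$. For the inductive step, the inclusion $\Gamma_{k+1}(N) \subseteq \mathcal A_{k+1} \cap N$ is automatic; for the reverse, I would take $\sigma \in \mathcal A_{k+1} \cap N$, observe that $\sigma \in \mathcal A_k \cap N = \Gamma_k(N)$ by the inductive hypothesis, so that $\sigma \in \Gamma_k(N) \cap \mathcal A_{k+1}$, and then conclude $\sigma \in \Gamma_{k+1}(N)$ by $(*)_k$. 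Running this step for $k = 1, \dots, k_0 - 1$ delivers all the desired equalities.

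There is essentially no hard analytic step here; the whole statement is a formal consequence of \eqref{eq_JohnsonKernel} together with an induction. The only point requiring care is the bookkeeping of indices: one must check that the inductive step producing the equality in degree $k+1$ only ever invokes $(*)_k$ with $k \leq k_0 - 1$, so that the hypothesis is never used outside its stated range, while the forward direction needs the Andreadakis equality exactly up to degree $k_0$. Matching these ranges is precisely what accounts for the shift between $k \leq k_0$ and $k < k_0$ on the two sides of the equivalence.
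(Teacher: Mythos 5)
Your proof is correct, and it rests on exactly the same pivot as the paper's: the identification $\ker(\tau_k^N) = (\Gamma_k(N) \cap \mathcal A_{k+1})/\Gamma_{k+1}(N)$ from \eqref{eq_JohnsonKernel}. Your forward direction is the paper's forward direction, essentially verbatim: equality at level $k+1 \leq k_0$ collapses the kernel in degree $k$. The only real divergence is in the converse. The paper argues by contrapositive: assuming some inclusion $\Gamma_j(N) \subseteq \mathcal A_j \cap N$ is strict, it picks $x \in (\mathcal A_j \cap N) \setminus \Gamma_j(N)$ and takes the \emph{maximal} $k < j$ with $x \in \Gamma_k(N)$, thereby exhibiting a non-trivial element of $\ker(\tau_k^N)$ for some $k < j$. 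You instead run a direct induction on $k$, pushing each $\sigma \in \mathcal A_{k+1} \cap N$ down into $\Gamma_{k+1}(N)$ via the inductive hypothesis and $(*)_k$. The two arguments are logically dual repackagings of the same finite descent, and your index bookkeeping (only ever invoking $(*)_k$ for $k < k_0$) is sound; if anything, the direct induction makes the range-matching between $k \leq k_0$ and $k < k_0$ slightly more transparent, while the paper's maximality argument localizes the failure more sharply (it shows a failure at level $j$ forces non-injectivity strictly below $j$).
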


\begin{proof}
Since $\ker(\tau_k^N) = (\Gamma_k(N) \cap \mathcal A_{k+1})/\Gamma_{k+1}(N)$, $\tau_k^N$ is certainly injective when $\Gamma_{k+1}(N) = \mathcal A_{k+1} \cap N$. Conversely, if the inclusion $\Gamma_j(N) \subseteq \mathcal A_j \cap N$ is not an equality (for some $j \geq 1$, we get an element $x \in (\mathcal A_j \cap N) \setminus \Gamma_j(N)$. Let $1 \leq k < j$ be maximal such that $x \in \Gamma_k(N)$. Then $x \notin \Gamma_{k+1}(N)$, but $x \in \mathcal A_j \subseteq \mathcal A_{k+1}$. This gives a non-trivial element $\overline x$ of $\ker(\tau_k^N) = (\Gamma_k(N) \cap \mathcal A_{k+1})/\Gamma_{k+1}(N)$, for \textit{some} $k < j$. The conclusion follows.
\end{proof}

Here we focus on the case where $N$ in the subgroup $\mathrm P\Sigma_n$ of (pure) basis-conjugating automorphisms of $\mathrm{IA}(F_n)$. An automorphism $\sigma \in \mathrm P\Sigma_n$ sends each $x_i$ to some conjugate $w_i^{-1}x_i w_i$. So if $\sigma \in \Gamma_k(\mathrm P\Sigma_n)$, then the formula defining the Johnson homomorphism gives:
\[\tau_k(\overline \sigma)(\overline{x_i}) = \overline{x_i^{-1}x^{\sigma}} 
= \overline{[x_i, w_i]} = [\overline{x_i}, \overline{w_i}] =[X_i, W_i].\]
In other words, the derivation $\tau_k(\overline \sigma)$ sends each element $X_i := \overline{x_i} \in H$ of the basis of $\mathbb L[n]$ to a bracket of the form $[X_i, W_i]$. Such derivations are called \textit{tangential}, and they are easily seen to form a Lie subalgebra of $\Der(\mathbb L[n])$, which we denote by $\Der^\tau(\mathbb L[n])$. It is also the one denoted by $\mathfrak{p}_n$ in \cite{S1} and \cite{ES}.

\section{A counter-example for $n = 3$}

By Proposition {\rmfamily \ref{P-And}}, in order to disprove the Andreadakis equality for $\mathrm{P}\Sigma_3$ for $k = 7$, it suffices to show that $\tau_6^{\mathrm{P}\Sigma_3}$ is not injective.

\subsection{The structure of $\mathrm P\Sigma_3$}

It turns out that the structure of $\mathrm P\Sigma_3$ and of its lower central series can be completely described, thanks to \cite[\S A.1]{Ib}. We do it in some detail, for the convenience of the reader. The main point is that the quotient of $\mathrm P\Sigma_3$ by inner automorphisms is free, a fact that fails for $\mathrm P\Sigma_n$ as soon as $n > 3$. 

\medskip

A classical result of McCool \cite{McC} asserts that the group $\mathrm P\Sigma_n$ of (pure) basis-conjugating automorphisms of the free group $F_n$ is the group generated by generators  $K_{ij}\ (i \neq j)$ (the automorphism of $F_n$ sending $x_i$ to ${x_j}^{-1} x_i x_j$ and fixing the $x_t$ for $t \neq i$),  submitted to the \textit{McCool relations}:
\[[K_{ik}, K_{jk}] = [K_{ik}K_{jk}, K_{ij}] =
[K_{ij},K_{kl}] = 1 \ \ \text{ for $i,j,k,l$ pairwise distinct.}\]
For $n = 3$, we get six generators $K_{21}$, $K_{12}$, $K_{13}$, $K_{31}$, $K_{23}$ and $K_{32}$, submitted to nine relations. Then, let us remark that in this special case, the element $K_{ik}K_{jk}$ (which is equal to $K_{jk}K_{ik}$) is the inner automorphism $c_k : w \mapsto x_k w x_k^{-1}$. We can re-write the presentation in terms of the generators $c_1$, $c_2$, $c_3$, $K_{21}$, $K_{21}$ and $K_{13}$ (using $K_{31} = c_1 K_{21}^{-1}$, etc.) as:
\[[c_j, K_{ij}] = [c_k, K_{ij}] =  1 \ \ \text{ for $\{i,j,k\} = \{1,2,3\}$.}\]
Notice that all these twelve relations hold, but we can deduce them from the nine involving only the six generators $c_1$, $c_2$, $c_3$, $K_{21}$, $K_{12}$ and $K_{13}$. 

\medskip

Now, let us consider the subgroup $F = \langle c_1, c_2, c_3\rangle$. It is the subgroup of inner automorphisms of $F_3$, so it is free on its generators $c_k$, and it is normal in $\Aut(F_3)$, hence in $\mathrm P\Sigma_3$ (this last statement can also easily be deduced from the above presentation). The quotient $\mathrm P\Sigma_3/F$ has the presentation given by adding the relations $c_k = 1$ to the above presentation. Using this relation to eliminate the $c_k$, we see that all the relations become trivial, so $\mathrm P\Sigma_3/F$ is free on the classes of $K_{21}$, $K_{12}$ and $K_{13}$ (denoted respectively by $\alpha$, $\beta$ and $\gamma$). We thus get a short exact sequence of groups:
\[F_3 \hookrightarrow \mathrm P\Sigma_3 \twoheadrightarrow F_3.\]
Since the quotient is free, this sequence automatically splits : consider, for instance, the section sending $\alpha$ to $K_{21}$, $\beta$ to $K_{12}$ and $\gamma$ to $K_{13}$. This gives a semi-direct product decomposition:
\[\mathrm P\Sigma_3 \cong F_3 \rtimes F_3.\]
Set $\mathfrak{h} := \mathcal{L}_*(\langle c_1, c_2, c_3 \rangle) \cong \mathbb L[3]$ and
$\mathfrak{g} := \mathcal{L}_*(\langle \alpha, \beta, \gamma \rangle) \cong \mathbb L[3]$.

\subsection{The lower central series}

Recall that the conjugation action of $\Aut(F_3)$ on its subgroup of inner automorphisms coincides with its canonical action on $F_3$. Since the above action is via a subgroup of
$\mathrm{IA}(F_3)$,
this is an almost-direct product, so by a theorem of Falk and Randell~\cite[Th.~3.1]{FR} (see also~\cite[Def-prop.~3.5]{Da2})
it induces a decomposition of the Lie algebra $\Lie_*(\mathrm P\Sigma_3)$ associated to the lower central series of $\mathrm P\Sigma_3$
as a semi-direct product of Lie algebras:
\begin{equation}\label{sdprod_dec}
\Lie_*(\mathrm P\Sigma_3) \cong \mathfrak h \rtimes \mathfrak g.
\end{equation}

This implies that $\Lie_*(\mathrm P\Sigma_3)$ has no torsion, and that for any $k \geq 1$
\[ \rk(\Lie_k(\mathrm P\Sigma_3)) = 2 \rk(\mathbb L_k[3]).\]
\begin{rem}
All the results of~\cite{MP} can be easily recovered from this decomposition.
\end{rem}

\subsection{The Andreadakis problem}\label{sec_counter-ex}

The Andreadakis problem for $\mathrm P\Sigma_3$ is equivalent to the injectivity of the Johnson morphism $\tau : \Lie_*(\mathrm P\Sigma_3) \rightarrow \Der(\mathbb L[3])$, characterized by:
\[d_{ij} := \tau(\overline K_{ij}) :
\begin{cases}
    X_i \mapsto [X_j, X_i]\\
    X_k \mapsto X_k \text{ if } k \neq i.
\end{cases}\]
Let us first study the behaviour of its kernel $\kappa(\mathrm P\Sigma_3)$ with respect to the semi-direct product decomposition~\eqref{sdprod_dec}. It is easy to show that $\tau|_{\mathfrak h}$ is injective: $\mathfrak h$ is sent to the Lie algebra of inner derivations of $\mathbb L[3]$,
and this must be an isomorphism (this is the Andreadakis equality for inner automorphisms of $\mathbb L[3]$, see~\cite[\S~2.2.2]{Da3}).
Thus, $\kappa(\mathrm P\Sigma_3) \cap \mathfrak h = 0$. We can in fact be more precise:

\begin{prop}\label{kappa_in_g}
We have $\kappa(\mathrm P\Sigma_3) \subseteq \mathfrak g$.
\end{prop}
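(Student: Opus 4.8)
The plan is to leverage the semi-direct product decomposition~\eqref{sdprod_dec}, namely $\Lie_*(\mathrm P\Sigma_3) \cong \mathfrak h \rtimes \mathfrak g$, together with the fact (already established above) that $\tau|_{\mathfrak h}$ is injective. Concretely, I would take an arbitrary homogeneous element $z \in \kappa(\mathrm P\Sigma_3)$ and write it uniquely as $z = u + v$ with $u \in \mathfrak h$ and $v \in \mathfrak g$, where the direct-sum decomposition is as $\Z$-modules in each degree (it is graded, so I may assume $u$ and $v$ are homogeneous of the same degree as $z$). The goal is to show $u = 0$, so that $z = v \in \mathfrak g$. Since $\tau(z) = 0$, I have $\tau(u) = -\tau(v)$ as derivations of $\mathbb L[3]$.

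The key step is to separate the two summands by evaluating the derivations on a well-chosen element of the basis of $H$, exploiting the different \emph{shape} of the images. The derivations coming from $\mathfrak h$ are \emph{inner}: they are $\Z$-linear combinations of the $\mathrm{ad}(X_k)$, and an inner derivation $\mathrm{ad}(c)$ sends \emph{every} $X_i$ to $[c, X_i]$. By contrast, the generators $\alpha, \beta, \gamma$ of $\mathfrak g$ are the classes of $K_{21}, K_{12}, K_{13}$, whose Johnson images $d_{21}, d_{12}, d_{13}$ each fix \emph{two} of the three basis elements $X_1, X_2, X_3$ and modify only the third. More precisely, $d_{ij}$ sends $X_i \mapsto [X_j, X_i]$ and fixes $X_k$ for $k \neq i$. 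The plan is therefore to compare $\tau(u)$ and $-\tau(v)$ by looking at how they act on each $X_i$, and to extract from the constraint $\tau(u) + \tau(v) = 0$ enough linear relations to force the inner part $u$ to vanish.

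The main obstacle I anticipate is controlling the image of $\mathfrak g$ precisely enough in higher degree: while $d_{ij}$ acts simply on the generators $X_i$, a general bracket of the $\alpha, \beta, \gamma$ maps to an iterated Lie bracket of the $d_{ij}$ inside $\Der^\tau(\mathbb L[3])$, and I must understand its effect on $H$ well enough to distinguish it from an inner derivation. The cleanest route is to package this comparison abstractly: I would consider the composite of $\tau$ with the evaluation map $\mathrm{ev} : \Der(\mathbb L[3]) \to \Hom_\Z(H, \mathbb L[3]) \cong H^* \otimes \mathbb L[3]$ sending a derivation to its restriction to $H$ (this is the isomorphism $\Der_k \cong H^* \otimes \mathbb L_{k+1}$ recalled in Section~\ref{sec_Johnson}), and analyse the images of $\mathfrak h$ and $\mathfrak g$ under this map. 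Under $\mathrm{ev} \circ \tau$, the summand $\mathfrak h$ lands in the image of inner derivations, whose restriction to $H$ has a recognisable form, and I would show that the only way $\mathrm{ev}(\tau(u))$ can coincide with $-\mathrm{ev}(\tau(v))$ is for $\tau(u)$ itself to be zero.

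Once $\tau(u) = 0$ is established, injectivity of $\tau|_{\mathfrak h}$ forces $u = 0$, whence $z = v \in \mathfrak g$. Since $z$ was an arbitrary homogeneous element of $\kappa(\mathrm P\Sigma_3)$ and the filtration is graded, this yields $\kappa(\mathrm P\Sigma_3) \subseteq \mathfrak g$, as desired. The whole argument reduces to a single structural observation — that inner derivations and the images of the $d_{ij}$ occupy transverse positions inside $\Der(\mathbb L[3])$ after restriction to $H$ — so the bulk of the work is in making this transversality explicit and checking it survives in each degree.
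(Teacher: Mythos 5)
Your overall reduction is exactly the paper's: write $z = u + v$ with $u \in \mathfrak h$, $v \in \mathfrak g$ (linearly $\mathfrak h \rtimes \mathfrak g$ is just $\mathfrak h \oplus \mathfrak g$), deduce $\tau(u) = -\tau(v)$ from $\tau(z)=0$, and conclude $u = 0$ from the injectivity of $\tau|_{\mathfrak h}$ \emph{provided} one knows $\tau(\mathfrak h) \cap \tau(\mathfrak g) = 0$. But that intersection statement is the entire content of the proposition, and you leave it as a plan ("the bulk of the work is in making this transversality explicit and checking it survives in each degree") without the idea that makes it tractable. Your proposed detour through $\mathrm{ev} : \Der(\mathbb L[3]) \to H^* \otimes_\Z \mathbb L[3]$ buys nothing: a derivation is already determined by its restriction to $H$, so $\mathrm{ev}$ is injective and comparing $\tau(\mathfrak h)$ with $\tau(\mathfrak g)$ after applying $\mathrm{ev}$ is the same problem as before. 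A naive degree-by-degree comparison of iterated brackets of $d_{12}, d_{21}, d_{13}$ against inner derivations is genuinely hard, and this is where your outline stalls.

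The missing observation is this: the three generators $d_{12}$, $d_{21}$, $d_{13}$ of $\tau(\mathfrak g)$ all annihilate the \emph{same} basis element $X_3$ (annihilate, not "fix" --- a derivation of positive degree cannot fix a generator), and the derivations sending $X_3$ to $0$ form a Lie subalgebra $\mathfrak d_3$ of $\Der(\mathbb L[3])$. Hence $\tau(\mathfrak g) \subseteq \mathfrak d_3$ in every degree at once, with no computation. On the other side, $\tau(\mathfrak h)$ is the algebra $\mathfrak I$ of inner derivations, and an inner derivation $[W,-]$ lies in $\mathfrak d_3$ only if $[W, X_3] = 0$, which in a free Lie algebra forces $W \in \Z X_3$ (the centralizer of a basis element is its own span). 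So $\tau(\mathfrak h) \cap \tau(\mathfrak g) \subseteq \mathfrak I \cap \mathfrak d_3 = \Z\cdot[X_3,-]$, and one finishes with a single degree-one check: $[X_3,-] = d_{13} + d_{23}$, and no nonzero multiple of $d_{13}+d_{23}$ lies in the span of $d_{12}$, $d_{21}$, $d_{13}$, since the $d_{ij}$ are linearly independent. Without isolating the subalgebra $\mathfrak d_3$ (or an equivalent device), your transversality claim remains unproved; with it, your argument collapses to the paper's proof.
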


\begin{proof}
We first show that $\tau(\mathfrak h) \cap \tau(\mathfrak g) = 0$. On the one hand, recall \cite[\S~2.2.2]{Da3} that $\tau(\mathfrak h)$ is the set $\mathfrak I$ of inner derivations of $\mathbb L[3]$. On the other hand, $\tau(\mathfrak g)$ is generated by $d_{12}$, $d_{21}$ and $d_{13}$, all of which send $X_3$ to $0$, so it is included in the Lie subalgebra $\mathfrak d_3$ of $\Der(\mathbb L[3])$ consisting of derivations sending $X_3$ to $0$. Then $\mathfrak I \cap \mathfrak d_3$ is the set of inner derivations $X \mapsto [W, X]$ such that $[W, X_3] = 0$, which is equivalent to $W \in \Z X_3$ (see for instance~\cite[Lem.~3.6]{Da3}). Hence $\mathfrak I \cap \mathfrak d_3$ is linearly generated by $[X_3,-] = d_{13} + d_{23}$. But no multiple of $[X_3,-]$ belongs to $\tau(\mathfrak g)$, since its degree one is the linear span of $d_{12}$, $d_{21}$ and $d_{13}$, and the $d_{ij}$ are linearly independent. This shows that $\tau(\mathfrak h) \cap \tau(\mathfrak g)$ is trivial.

Linearly, $\mathfrak h \rtimes \mathfrak g$ is just the direct sum $\mathfrak h \oplus \mathfrak g$. Let $u+v \in \kappa(\mathrm P\Sigma_n)$ with  $u \in \mathfrak h$ and $v \in \mathfrak g$. Then $\tau(u+v) = 0$ is equivalent to $\tau(u) = \tau(-v) \in \tau(\mathfrak h) \cap \tau(\mathfrak g) = 0$. But $\tau|_{\mathfrak h}$ is injective, so $\tau(u) = 0$ means that $u = 0$, hence $u+v = v \in \mathfrak g$.
\end{proof}

\begin{rem}
The above statement can seem surprising, since $\mathfrak g$ depends on the choice of section of $\mathrm P\Sigma_3 \twoheadrightarrow \mathrm P\Sigma_3/F$ that we made, whereas $\kappa(\mathrm P\Sigma_3)$ does not. However, we will see later (Proposition~\ref{kappa_as_intersection}), that this inclusion stays true for all the obvious choices of sections.
\end{rem}

The Lie algebra $\mathfrak g$ is free on the three elements $a:= \overline{K_{12}}$, $b:= \overline{K_{21}}$ and $c:= \overline{K_{13}}$ of $\Lie_1( \mathrm P\Sigma_3) = \mathrm P\Sigma_3^{ab}$. An explicit computation leads us to the following left-normed element:
\begin{align*}
\varpi:= &\hspace{1.2em}  [c, a, b, b, a, c] + [c, a, c, b, a, b] + [c, b, a, c, a, b] + [c, b, a, c, b, a] \\ 
&+ [c, b, b, a, a, c] + [c, b, c, b, a, a] - [c, a, c, a, b, b] - [c, a, c, b, b, a] \\
&- [c, b, a, a, c, b] - [c, b, a, b, a, c] - [c, b, b, a, c, a] - [c, b, c, a, b, a].
\end{align*}

Our main result is:
\begin{prop}
The element $\varpi$ is a non-trivial element of the kernel of $\tau$.
\end{prop}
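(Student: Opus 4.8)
The plan is to check the two assertions directly: that $\varpi$ lies in $\ker(\tau)$, and that it is non-zero in $\mathfrak g \cong \mathbb L[3]$.

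Since $\tau$ restricts on $\mathfrak g$ to the morphism of graded Lie algebras determined by $a \mapsto d_{12}$, $b \mapsto d_{21}$ and $c \mapsto d_{13}$, the image $\tau(\varpi)$ is obtained by substituting these three derivations for $a$, $b$, $c$ in the left-normed brackets defining $\varpi$, the bracket now being the commutator of derivations of $\mathbb L[3]$. The result is a homogeneous derivation of degree $6$, i.e.\ an element of $\Der_6(\mathbb L[3]) \cong H^* \otimes_\Z \mathbb L_7[3]$, so it is determined by the three values $\tau(\varpi)(X_1)$, $\tau(\varpi)(X_2)$, $\tau(\varpi)(X_3) \in \mathbb L_7[3]$. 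One of these comes for free: the generators $K_{12}$, $K_{21}$, $K_{13}$ all fix $x_3$, so each of $d_{12}$, $d_{21}$, $d_{13}$ annihilates $X_3$; hence so does every iterated commutator of them, and $\tau(\varpi)(X_3) = 0$. It remains to show $\tau(\varpi)(X_1) = 0$ and $\tau(\varpi)(X_2) = 0$.

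These two values I would compute explicitly by the Leibniz rule, building each of the twelve iterated commutators of derivations up from its innermost bracket and evaluating on $X_1$ and $X_2$. It is cleanest to work inside the tensor algebra $\mathbb T[3]$, into which $\mathbb L[3]$ embeds and where the bracket is $[u,v] = uv - vu$, so that the two values become explicit degree-$7$ non-commutative polynomials whose vanishing is a finite linear check. Tracking the multidegree in $(X_1, X_2, X_3)$ — where $d_{12}$, $d_{21}$, $d_{13}$ raise the $X_2$-, $X_1$- and $X_3$-degree by one respectively — shows that $\tau(\varpi)(X_1)$ lies in the multidegree $(3,2,2)$ part of $\mathbb L_7[3]$ and $\tau(\varpi)(X_2)$ in the $(2,3,2)$ part, each of rank $30$, which keeps the bookkeeping within reason. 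I expect this degree-$7$ expansion to be the main obstacle: it is large enough that it is most safely carried out (or at least confirmed) by machine, which is presumably how $\varpi$ was found in the first place, as a non-trivial element of the kernel of $\tau$ on the $14$-dimensional multidegree $(2,2,2)$ part of $\mathfrak g$.

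For non-triviality, note that every term of $\varpi$ has multidegree $(2,2,2)$ in $(a,b,c)$, so $\varpi$ lies in the multidegree $(2,2,2)$ component of $\mathbb L_6[3]$, which is free abelian of rank $14$. Using the embedding $\mathfrak g \hookrightarrow \mathbb T[3]$, I would expand $\varpi$ as a homogeneous non-commutative polynomial in $a$, $b$, $c$ and exhibit a single word whose coefficient is non-zero (equivalently, express $\varpi$ in a Hall or Lyndon basis of this component and observe that not all coordinates vanish). This certifies that $\varpi \neq 0$ and, together with the previous step, shows that $\varpi$ is a non-trivial element of $\ker(\tau)$.
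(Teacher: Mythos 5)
Your proposal is correct and follows essentially the same route as the paper: check $\tau(\varpi)=0$ by substituting $d_{12}, d_{21}, d_{13}$ for $a,b,c$ and evaluating the resulting derivation on the generators (a finite computation in $\mathbb{T}[3]$, best done by machine), and certify $\varpi \neq 0$ by expanding it in the tensor algebra and finding a word with non-zero coefficient --- the paper pinpoints that word as $c^2b^2a^2$, arising only from $[c,a,c,a,b,b]$. Your extra observations (that $\tau(\varpi)(X_3)=0$ is automatic, and the multidegree bookkeeping reducing the check to the $(3,2,2)$ and $(2,3,2)$ components) are correct refinements but do not change the substance of the argument.
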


\begin{proof}
The non-triviality of $\varpi$ in the free Lie algebra can be seen directly by noticing that it has the non-zero term $c^2b^2a^2$
as an element of the tensor algebra on $a, b, c$, which is the universal enveloping algebra of the free Lie algebra generated by $a,b,c$. This term only appears from the bracket $[c, a, c, a, b, b]$ is our expression (the only bracket where the $a's$ appear before the $b's$). Proving that $\tau(\varpi) = 0$ can be done by a direct computation, although it is easier to do it by computer calculation than by hand. Precisely, since $\tau$ is a Lie morphism, one needs to replace $a$ by $\tau(a) = d_{12}$, $b$ by $\tau(b) = d_{21}$ and $c$ by $\tau(c) = d_{13}$ in the previous expression, and to check that the derivation $\tau(\varpi)$ so obtained is trivial on the generators $X_1$, $X_2$, $X_3$ of $\mathbb L[3]$. Notice that this can be done in the Lie algebra of derivations of the tensor algebra $\mathbb T[3]$, if one prefers using polynomials instead of Lie brackets (see~\cref{sec_Johnson}).
\end{proof}



\section{Functoriality and propagation of the counter-example}

Contrary to what could happen with $\mathrm{IA}_n$, the Andreadakis problem for $\mathrm P\Sigma_n$ does not stabilize. The main reason for this is the existence of a splitting of the canonical injection $\mathrm P\Sigma_n \hookrightarrow \mathrm P\Sigma_{n+1}$. Indeed, this injection corresponds to adding a strand to (pure) loop braids, and a splitting is given by forgetting a strand.

Precisely, let $n \geq 3$.
For a three points subset
$I = \{i_1 < i_2 < i_3\} \subset \{1, ..., n\}$, we define the injection $\eta=\eta_I : F_3 \rightarrow F_n$
by $x_s$ to $x_{i_s}$ for $1 \leq s \leq 3$.
Then we get an injection of $\Aut(F_3)$ into $\Aut(F_n)$ by sending $f$ to the automorphism sending $x_{i_s}$ to $\eta(f(x_s))$
for $1 \leq s \leq 3$ and fixing the $x_j$ for $j \notin I$. This construction clearly sends basis-conjugating automorphisms to basis-conjugating automorphisms, so it induces an injection:
\[\iota_I : \mathrm P\Sigma_3 \hookrightarrow \mathrm P\Sigma_n.\]
This injection \textit{splits}: there is a projection 
\[\pi_I : \mathrm P\Sigma_n \twoheadrightarrow \mathrm P\Sigma_3\]
such that $\pi_I \circ \iota_I = id$. Namely, notice that an element $f \in \mathrm P\Sigma_n$ sends each $x_j$ to a conjugate of itself, so it preserves the normal closure $\mathcal N_I := \mathcal N(\{x_j\}_{j \notin I})$ (where $\mathcal N(A)$ denotes the normal closure in $F_n$ of a subset $A$ of $F_n$). As a consequence, it induces an automorphism $\pi_I(f)$ of $F_3 \cong F_n/\mathcal N_I$ (the fact that $\pi_I(f)$ is invertible when $f$ comes from the same construction for $f^{-1}$). The remaining verifications are left to the reader.

\begin{rem}
With the point of view of pure loop braids, $\pi_I$ corresponds to forgetting the strands not indexed by an element of $I$.
\end{rem}

Since the construction $\Lie_*(-)$ is functorial (it takes a group morphism to a Lie algebra morphism), this induces a split injection of Lie rings (still denoted by $\iota_I$ and $\pi_I$):
\[\xymatrix{
 \Lie_*(\mathrm P\Sigma_3) \ar@{^{(}->}@<.5ex>[r]^{\iota_I}
 &\Lie_*(\mathrm P\Sigma_{n}) \ar@{->>}@<.5ex>[l]^{\pi_I}.
 }
\]

It was recalled at the end of \cref{sec_Johnson} that the Johnson morphism for $\mathrm P\Sigma_n$ takes values in the Lie algebra $\Der^\tau(\mathbb L[n])$ of tangential derivations.
We show the story that we have just told for McCool groups also holds for these. Precisely, the inclusion of $I$ into $\{1, ..., n\}$ induces an injection of $\mathbb L[3]$ into $\mathbb L[n]$ (sending $X_s$ to $X_{i_s}$ for $1 \leq s \leq 3$), also denoted by $\eta$. It then induces an injection
\[\iota_I : \Der^\tau(\mathbb L[3]) \hookrightarrow \Der^\tau(\mathbb L[n])\]
defined, for $d \in \Der^\tau(\mathbb L[3])$, by $\iota_I(d) : X_{i_s} \mapsto \eta(d(X_s))$ for $1 \leq s \leq 3$
and $\iota_I(d)(X_j) = 0$ for $j \notin I$. Conversely, similarly to what happened for basis-conjugating automorphisms, if we take $d \in \Der^\tau(\mathbb L[n])$, then it preserves the Lie ideal $\mathcal J$ of $\mathbb L[n]$ generated by the $X_j$ for $j \notin I$, so it induces a derivation $\pi_I(d)$ of $\mathbb L[3] = \mathbb L[n]/\mathcal J$. The resulting map $\pi_I$ is then a splitting of $\iota_I$. 

\medskip

All these maps fit in the following commutative diagram:
\[\xymatrix{
 \kappa_*(\mathrm P\Sigma_3) \ar@{^{(}->}[r] \ar@{^{(}-->}[d]^{\iota_I}
 &\Lie_*(\mathrm P\Sigma_3)
 \ar@{^{(}->}[d]^{\iota_I} \ar[r]^(.45){\tau}
 &\Der^{\tau}(\mathbb L[3]) \ar@{^{(}->}[d]^{\iota_I}
 \cr
 \kappa_*(\mathrm P\Sigma_n)
 \ar@{^{(}->}[r]
 \ar@{-->>}@<1ex>[u]^{\pi_I}
 &\Lie_*(\mathrm P\Sigma_n)
 \ar@{->>}@<1ex>[u]^{\pi_I}
 \ar[r]^(.45){\tau}
 &\Der^{\tau}(\mathbb L[n]).
 \ar@{->>}@<1ex>[u]^{\pi_I}
}
\]
where $\tau$ is the Johnson morphism, $\kappa$ is its kernel (as recalled in~\cref{sec_Johnson}), and ``commutative" means that every square commutes. Thus we get induced maps on the kernels, still denoted by $\iota_I$ and $\pi_I$, still satisfying $\pi_I \circ \iota_I = id$, since they are the restrictions to the kernels of the previous $\iota_I$ and $\pi_I$.

\medskip

The element $\varpi$ from \cref{sec_counter-ex} is a non-trivial element of $\kappa_6(\mathrm P\Sigma_3)$, so for each $n \geq 3$ we get a collection of elements $\iota_I(\varpi)$ in $\kappa_6(\mathrm P\Sigma_n)$ indexed by subsets of cardinality $3$ of $\{1, ..., n\}$. All of these are non-trivial, since $\pi_I(\iota_I(\varpi)) = \varpi \neq 0$. These elements give counter-examples to the Andreadakis equality for $\mathrm P\Sigma_n$ for every $n \geq 3$. Furthermore, we are going to show that they are linearly independant, giving a lower bound on the size of $\kappa_6(\mathrm P\Sigma_n)$, hence of $(\mathrm P\Sigma_n \cap \mathcal{A}_7)/\Gamma_7(\mathrm{P}\Sigma_n)$, as announced. This relies on the following
\begin{lem}
Let $n \geq 3$ be an integer. If $I$ and $J$ are distinct subsets of $\{1, ..., n\}$ of cardinality $3$, then $\pi_J(\iota_I(\varpi)) = 0$.
\end{lem}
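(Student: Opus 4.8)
The plan is to reduce everything to understanding the composite Lie ring endomorphism $\Phi := \pi_J \circ \iota_I$ of $\Lie_*(\mathrm P\Sigma_3)$ on the three generators $a = \overline{K_{12}}$, $b = \overline{K_{21}}$, $c = \overline{K_{13}}$ of $\mathfrak g$. Since $\iota_I$ and $\pi_J$ are morphisms of Lie rings and $\varpi$ is a bracket polynomial in $a,b,c$, we have $\Phi(\varpi) = \varpi\bigl(\Phi(a), \Phi(b), \Phi(c)\bigr)$, so it suffices to compute these three images. First I would record how $\iota_I$ and $\pi_J$ act on the generators $\overline{K_{st}}$. By construction $\iota_I$ sends the class of $K_{st}$ to that of $K_{i_s i_t}$; and $\pi_J$, being induced by the quotient $F_n \twoheadrightarrow F_n/\mathcal N_J \cong F_3$, sends the class of $K_{st}$ to $0$ as soon as one of the indices $s,t$ lies outside $J$ (the corresponding strand being forgotten), and to $\overline{K_{pq}}$ when $s = j_p$ and $t = j_q$ with $s,t \in J$. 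Consequently $\Phi(a)$ and $\Phi(b)$ vanish unless $\{i_1, i_2\} \subseteq J$, while $\Phi(c)$ vanishes unless $\{i_1, i_3\} \subseteq J$.

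Next I would exploit the hypothesis $I \neq J$. As $|I| = |J| = 3$, the inclusion $I \subseteq J$ would force $I = J$; hence at least one index of $I$ fails to lie in $J$. A short case check then shows that at least one of $\Phi(a)$, $\Phi(b)$, $\Phi(c)$ is zero: if $i_1 \notin J$ all three vanish; if $i_2 \notin J$ then $\Phi(a) = \Phi(b) = 0$; and if $i_3 \notin J$ then $\Phi(c) = 0$.

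Finally, the element $\varpi$ is homogeneous of multidegree $(2,2,2)$ in $(a,b,c)$: inspecting the twelve left-normed brackets in its definition, each contains exactly two occurrences of $a$, two of $b$ and two of $c$. In particular, every bracket term appearing in $\varpi$ involves all three generators. By multilinearity of the Lie bracket, substituting $0$ for any one of $a, b, c$ annihilates each such term; since at least one of $\Phi(a), \Phi(b), \Phi(c)$ is $0$, we conclude $\Phi(\varpi) = 0$, which is exactly the claim.

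I do not anticipate a serious obstacle here. The only points requiring care are the correct description of $\pi_J$ on the $K$-generators — in particular that such a generator is killed whenever \emph{either} of its two indices is forgotten — and the verification that $\varpi$ is genuinely of multidegree $(2,2,2)$, so that the vanishing of a single image generator already suffices to annihilate the whole element.
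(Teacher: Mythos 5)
Your proposal is correct and follows essentially the same route as the paper: reduce to showing that $\pi_J \circ \iota_I$ kills one of the generators $a$, $b$, $c$ (the paper phrases this at the group level, showing one of $K_{12}$, $K_{21}$, $K_{13}$ maps to the identity because a conjugating or conjugated generator lies in $\mathcal N_J$), and then conclude since every bracket term of $\varpi$ involves all three generators. Your explicit case analysis on which $i_s \notin J$ and the multidegree $(2,2,2)$ observation are exactly the details the paper leaves to the reader.
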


\begin{proof}
Recall that here $\pi_J \circ \iota_I$ denotes the restriction to $\kappa$ of the Lie algebra morphism $\pi_J \circ \iota_I$ from $\Lie_*(\mathrm P\Sigma_3)$ to itself. Given the definition of $\varpi$, we only need to show that $\pi_J \circ \iota_I$ sends either $a$, $b$ or $c$ to $0$. Recall further that this Lie morphism is induced by the group morphism $\pi_J \circ \iota_I$ from $\mathrm P\Sigma_3$ to itself described above. Given the definition of $a$, $b$ and $c$, it will be enough to show that this morphism sends $K_{12}$, $K_{21}$ or $K_{13}$ to the identity. The latter boils down to observing that one of them becomes the identity if either one of $x_1$, $x_2$ or $x_3$ is equaled to $1$. For instance, if $I = \{i_1 < i_2 < i_3\}$ and $J$ does no contain $i_3$, then $\iota_I(K_{13}) = K_{i_1i_3}$ (who sends $X_{i_1}$ to $X_{i_3}^{-1} X_{i_1} X_{i_3}$ and fixes the other generators) induces the identity on $F_n/\mathcal N_J$, since $X_{i_3} \in \mathcal N_J$. The other two cases are similar.
\end{proof}

\begin{cor}\label{lin_ind}
Let $n \geq 3$. The $\iota_I(\varpi)$, for $I \subseteq \{1, ..., n\}$ of cardinality $3$, are linearly independent elements of $\kappa_6(\mathrm P\Sigma_n)$
\end{cor}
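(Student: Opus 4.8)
The plan is to exploit the family of splittings $\pi_J \circ \iota_I$ as a system of ``coordinate projections'' that isolate the summands of any hypothetical linear relation, in the spirit of a dual-basis argument. First I would suppose that a linear relation $\sum_I \lambda_I\, \iota_I(\varpi) = 0$ holds in $\kappa_6(\mathrm P\Sigma_n)$, the sum ranging over all $3$-subsets $I$ of $\{1, \dots, n\}$, with coefficients $\lambda_I \in \Z$. Since each $\pi_J$ restricts to a morphism of Lie algebras on the kernels $\kappa_*$, it is in particular $\Z$-linear, so I may apply $\pi_J$ to this relation to obtain $\sum_I \lambda_I\, \pi_J\bigl(\iota_I(\varpi)\bigr) = 0$ in $\kappa_6(\mathrm P\Sigma_3)$.

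The key inputs are then the preceding lemma together with the identity $\pi_I \circ \iota_I = \mathrm{id}$. By the lemma, every term with $I \neq J$ vanishes, $\pi_J(\iota_I(\varpi)) = 0$, while the surviving $I = J$ term equals $\pi_J(\iota_J(\varpi)) = \varpi$. Hence the whole relation collapses to $\lambda_J\, \varpi = 0$, now living inside $\Lie_6(\mathrm P\Sigma_3)$.

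It remains only to deduce $\lambda_J = 0$. Here I would invoke the fact, recorded just after the semi-direct product decomposition~\eqref{sdprod_dec}, that $\Lie_*(\mathrm P\Sigma_3)$ has no torsion: since $\varpi \neq 0$ (as established in~\cref{sec_counter-ex}), the equation $\lambda_J\, \varpi = 0$ forces $\lambda_J = 0$ over $\Z$. As $J$ was an arbitrary $3$-subset, all coefficients vanish, which is precisely the claimed linear independence. I expect no genuine obstacle in this step: the entire weight of the statement is carried by the previous lemma and by the non-triviality of $\varpi$, both already available, so the corollary is a formal consequence. The only point deserving care is that applying $\pi_J$ deposits the relation back in $\Lie_*(\mathrm P\Sigma_3)$, where torsion-freeness is known, rather than relying on any (unneeded) torsion statement for $\mathrm P\Sigma_n$ itself.
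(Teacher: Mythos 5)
Your proof is correct and follows exactly the paper's argument: apply $\pi_J$ to the relation, use the vanishing lemma and $\pi_J \circ \iota_J = \mathrm{id}$ to collapse it to $\lambda_J \varpi = 0$, and conclude. Your explicit appeal to the torsion-freeness of $\Lie_*(\mathrm P\Sigma_3)$ to pass from $\lambda_J \varpi = 0$ to $\lambda_J = 0$ is a detail the paper leaves implicit, but it is the same proof.
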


\begin{proof}
If $\sum_I \lambda_I \iota_I(\varpi) = 0$, then for every $J$, apply $\pi_J$ to get $\lambda_J \varpi = 0$, hence $\lambda_J = 0$.
\end{proof}

\begin{proof}[Proof of Theorem~\ref{main_thm}]
Recall that $\kappa_6(\mathrm P\Sigma_n) \cong (\Gamma_6(\mathrm P\Sigma_n) \cap \mathcal A_7)/\Gamma_7(\mathrm P\Sigma_n)$ (this is formula \eqref{eq_JohnsonKernel} for $N = \mathrm P\Sigma_n$ and $k = 6$). This is a central subgroup of $(\mathrm P\Sigma_n \cap \mathcal A_7)/\Gamma_7(\mathrm P\Sigma_n)$, so the theorem follows from Lemma~\ref{lin_ind}.
\end{proof}

\section{Action of the symmetric group}

There is an action of the symmetric group $\Sym_3$ on $F_3$ permuting the variables. This induces an action on $\mathrm P\Sigma_3$ permuting the roles of the variables, which is the usual induced action on automorphisms, defined by $(\sigma \cdot f)(w) := \sigma \cdot f(\sigma^{-1} \cdot w))$, for $f \in \Aut(F_3)$, $w \in F_3$ and $\sigma \in \Sym_3$. These are actions by group automorphisms, so by functoriality of $\Lie_*(-)$, we get an induced action by graded Lie automorphisms on $\Lie_*(\mathrm P\Sigma_3)$. We also get an induced action on $\Lie(F_3) = \mathbb L[3]$ (permuting the $X_i$), and an induced action on $\Der(\mathbb L[3])$ which permutes the roles of the $X_i$, given by the same formula as for automorphisms. Then $\tau$ is easily checked to be equivariant, hence $\kappa(\mathrm P\Sigma_3) = \ker(\tau)$ is stable under this action. We first use this language to improve the result of Proposition~\ref{kappa_in_g}:
\begin{prop}\label{kappa_as_intersection}
Let $c = (123) \in \Sym_3$. We have $\kappa(\mathrm P\Sigma_3) = \mathfrak g \cap c \mathfrak g \cap c^2 \mathfrak g =  \bigcap\limits_{\sigma \in \Sym_3} \sigma \mathfrak g$.
\end{prop}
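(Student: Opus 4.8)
The plan is to establish the two stated equalities at once by proving a cycle of three inclusions:
\[\kappa(\mathrm P\Sigma_3) \subseteq \bigcap_{\sigma \in \Sym_3} \sigma\mathfrak g \subseteq \mathfrak g \cap c\mathfrak g \cap c^2\mathfrak g \subseteq \kappa(\mathrm P\Sigma_3).\]
The middle inclusion is immediate, since $\{e, c, c^2\}$ is a subset of $\Sym_3$, so intersecting over fewer translates yields a larger subalgebra. It then suffices to treat the outer two inclusions, after which chaining forces all three subalgebras to coincide.

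For the first inclusion, I would combine Proposition~\ref{kappa_in_g} with the equivariance of $\tau$ recalled at the start of this section. Since $\tau$ is $\Sym_3$-equivariant and $\kappa(\mathrm P\Sigma_3) = \ker\tau$, the kernel is $\Sym_3$-stable: $\sigma\,\kappa(\mathrm P\Sigma_3) = \kappa(\mathrm P\Sigma_3)$ for every $\sigma$. Applying the automorphism $\sigma$ to the inclusion $\kappa(\mathrm P\Sigma_3) \subseteq \mathfrak g$ of Proposition~\ref{kappa_in_g} then gives $\kappa(\mathrm P\Sigma_3) = \sigma\,\kappa(\mathrm P\Sigma_3) \subseteq \sigma\mathfrak g$ for each $\sigma$, and intersecting over $\sigma$ yields $\kappa(\mathrm P\Sigma_3) \subseteq \bigcap_\sigma \sigma\mathfrak g$.

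The substance lies in the last inclusion, where I would exploit the description, extracted from the proof of Proposition~\ref{kappa_in_g}, of $\tau(\mathfrak g)$ as a subalgebra of $\mathfrak d_3 := \{d \in \Der(\mathbb L[3]) : d(X_3) = 0\}$. Writing $\mathfrak d_i := \{d : d(X_i) = 0\}$, equivariance gives $\tau(\sigma\mathfrak g) = \sigma\,\tau(\mathfrak g) \subseteq \sigma\mathfrak d_3 = \mathfrak d_{\sigma(3)}$, the last equality holding because the $\Sym_3$-action on derivations permutes the roles of the $X_i$ (a direct check on $(\sigma d)(X_{\sigma(3)}) = \sigma(d(X_3))$). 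Taking $\sigma \in \{e, c, c^2\}$ with $c = (123)$, so that $e(3) = 3$, $c(3) = 1$ and $c^2(3) = 2$, any $w \in \mathfrak g \cap c\mathfrak g \cap c^2\mathfrak g$ satisfies $\tau(w) \in \mathfrak d_3 \cap \mathfrak d_1 \cap \mathfrak d_2$. Since a derivation of the free Lie algebra $\mathbb L[3]$ is determined by its values on $X_1, X_2, X_3$, we have $\mathfrak d_1 \cap \mathfrak d_2 \cap \mathfrak d_3 = 0$, whence $\tau(w) = 0$ and $w \in \kappa(\mathrm P\Sigma_3)$.

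The only genuinely delicate point is the bookkeeping in this last step: one must verify that the three cosets $e, c, c^2$ realize, via $\sigma \mapsto \sigma(3)$, all three indices $1, 2, 3$, which is precisely what forces $\tau(w)$ to annihilate every generator. Once this is noted, it is worth observing that the cyclic subgroup $A_3 = \langle c\rangle$ alone suffices for this inclusion, while the full $\Sym_3$-stability of $\kappa(\mathrm P\Sigma_3)$ is needed only for the first inclusion; closing the cycle of inclusions then delivers both equalities asserted in the statement.
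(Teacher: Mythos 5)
Your proposal is correct and follows essentially the same route as the paper's own proof: both use the $\Sym_3$-stability of $\kappa(\mathrm P\Sigma_3) = \ker\tau$ together with Proposition~\ref{kappa_in_g} for the inclusion into the intersection, and both use $\tau(\sigma\mathfrak g) \subseteq \mathfrak d_{\sigma(3)}$ (with $\mathfrak d_1 \cap \mathfrak d_2 \cap \mathfrak d_3 = 0$) for the converse. Your additional remark that the cyclic subgroup $\langle c \rangle$ suffices for the hard inclusion is a nice observation, but the argument itself is the paper's.
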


\begin{proof}
Since $\kappa(\mathrm P\Sigma_3)$ is stable under the action of $\Sym_3$ and included in $\mathfrak g$ (Proposition \ref{kappa_in_g}), it is included in every $\sigma \mathfrak g$ (for $\sigma \in \Sym_3$). Conversely, let us show that $\mathfrak g \cap c \mathfrak g \cap c^2 \mathfrak g \subseteq \kappa(\mathrm P\Sigma_3)$. Equivalently, we need to prove that $\tau(\mathfrak g \cap c \mathfrak g \cap c^2 \mathfrak g) = 0$. Let $\mathfrak d_i$ be the Lie subalgebra of $\Der(\mathbb L[3])$ consisting of derivations sending $X_i$ to $0$ (for $i = 1,2,3$).
We have remarked in the proof of Proposition~\ref{kappa_in_g} that $\tau(\mathfrak g) \subseteq \mathfrak d_3$. This implies that $\tau(c\mathfrak g) = c\tau(\mathfrak g) \subseteq c\mathfrak d_3 = \mathfrak d_1$. Similarly, $\tau(c^2\mathfrak g) \subseteq \mathfrak d_2$. As a consequence, we get $\tau(\mathfrak g \cap c \mathfrak g \cap c^2 \mathfrak g) \subseteq \mathfrak d_1 \cap \mathfrak d_2 \cap \mathfrak d_3 = 0$, since a derivation sending $X_1$, $X_2$ and $X_3$ to $0$ is trivial.
\end{proof}

We can describe explicitly the action of $\Sym_3$ on $\kappa(\mathrm P\Sigma_3)$ (whose elements are described as Lie polynomials
in the free Lie algebra $\mathfrak g$ on $a$, $b$ and $c$) as follows. 

Recall that $F$ denotes the subgroup of inner automorphisms of $F_3$. It is clearly stable under the action of $\Sym_3$ on $\mathrm P\Sigma_3$. As a consequence, there is an induced action on $P\Sigma_3/F$, given by $\sigma \cdot \pi(g) = \pi(\sigma \cdot g)$, where $\pi : \mathrm P\Sigma_3 \twoheadrightarrow P\Sigma_3/F$ is the canonical projection. Recall that $P\Sigma_3/F$ is free on the classes $\alpha = \pi(K_{21}) = \pi(K_{31})^{-1}$, $\beta = \pi(K_{12}) = \pi(K_{32})^{-1}$ and $\gamma = \pi(K_{13}) = \pi(K_{23})^{-1}$. Using that $\sigma \cdot K_{ij} = K_{\sigma(i)\sigma(j)}$, we can write down explicitly the action of $\sigma$ on the generators. Namely, the transposition $(12)$ acts by the automorphism exchanging $\alpha$ and $\beta$ and sending $\gamma$ to $\gamma^{-1}$, whereas $(23)$ acts by sending $\alpha$ to $\alpha^{-1}$ and exchanging $\beta$ and $\gamma$. 

By functoriality of $\Lie_*(-)$, the Lie algebra $\Lie_*(\mathrm P\Sigma_3/F)$ inherits an action of $\Sym_3$ by automorphism with respect to which the canonical projection $\Lie_*(\pi)$ is equivariant. Explicitly,  $\Lie_*(P\Sigma_3/F)$ is free on $\overline\alpha$, $\overline\beta$ and $\overline\gamma$, $(12)$ acts by exchanging $\overline\alpha$ and $\overline\beta$ and sending $\overline\gamma$ to $- \overline\gamma$, whereas $(23)$ acts by sending $\overline\alpha$ to $-\overline\alpha$ and exchanging $\overline\beta$ and $\overline\gamma$. Moreover, recall that $\Lie_*(\pi)$ induces an isomorphism (sending $a$, $b$ and $c$ to $\overline\alpha$, $\overline\beta$ and $\overline\gamma$ respectively) when restricted to~$\mathfrak g$. In particular, its restriction to $\kappa(\mathrm P\Sigma_3)$ is injective and $\Sym_3$-equivariant. This allows us to compute the action of elements of $\kappa(\mathrm P\Sigma_3)$, expressed in terms of $a$, $b$ and $c$, by identifying $a$ (resp.~$b$, $c$) with $\overline\alpha$ (resp.~$\overline\beta$, $\overline\gamma$). For instance, one can easily check that $\tau \cdot \varpi = -\varpi$ for $\tau = (12)$ and $\tau = (23)$, so $\kappa_6(\mathrm P\Sigma_3) = \Z\varpi$ is the sign representation of $\Sym_3$.

\section{Final remarks}

Explicit computer calculations (of $\tau|_{\mathfrak g}$) allow us to find explicit generators of $\kappa_k(\mathrm{P}\Sigma_3) \otimes \Q$ for $k \leq 9$. The corresponding dimensions are as follows (where we also give the dimension of the ambient space $\mathbb L_k[3] \otimes \Q$):
\[
\begin{array}{c|ccccccccc}
k & 1 & 2 & 3 & 4 & 5 & 6 & 7 & 8 & 9\\
\hline
\dim_{\Q}\mathbb L_k[3] & 3 & 3 & 8 & 18 & 48 & 116 & 312 & 810 & 2184\\
\dim_{\Q}(\kappa_k(\mathrm{P}\Sigma_3)) & 0 & 0 & 0 & 0 & 0 & 1 & 6 & 24 & 92
\end{array}
\]

Since $\mathbb L_k[3]$ is torsion-free, $\dim_{\Q}(\kappa_k(\mathrm{P}\Sigma_3)) = 0$ is enough to conclude that $\kappa_k(\mathrm{P}\Sigma_3) = 0$, so this holds for $k \leq 5$. Then Proposition~\ref{eq_JohnsonKernel} ensures that for any $k \leq 6$
\[ \Gamma_k(\mathrm{P}\Sigma_3) = \mathcal{A}_k(F_3)\cap \mathrm{P}\Sigma_3.\]

As a consequence, $\mathcal{A}_7(F_3)\cap \mathrm{P}\Sigma_3 \subseteq \mathcal{A}_6(F_3)\cap \mathrm{P}\Sigma_3 = \Gamma_6(\mathrm{P}\Sigma_3)$, so the quotient $(\mathcal{A}_7(F_3)\cap \mathrm{P}\Sigma_3)/\Gamma_6(\mathrm{P}\Sigma_3)$ is a subgroup of $\Lie_6(\mathrm{P}\Sigma_3)$, hence it is in fact torsion-free abelian.

\medskip

Our computations give explicit generators of the $\kappa_k(\mathrm{P}\Sigma_3)$ for $k = 6,7,8,9$, and they allow us to describe these  as $\Sym_3$-representations. Precisely, let us write $\widetilde H$ for $\Lie_*(P\Sigma_3/F)$, seen as a $\Sym_3$-representation (isomorphic to $H \otimes \mathrm{sgn}$).  We find that the map $\widetilde H \otimes \kappa_k(\mathrm{P}\Sigma_3) \rightarrow \kappa_{k+1}(\mathrm{P}\Sigma_3)$ induced by the Lie bracket is injective for $k \leq 8$, but not surjective if $k = 5, 6, 7, 8$. This means that we find new generators for the ideal $\kappa_k(\mathrm{P}\Sigma_3)$ in degrees $k = 6,7,8,9$. We do not know if this ideal (which coincides with the ideal of relations for the Lie subalgebra of $\Der(\mathbb L[3])$ generated by $d_{12}$, $d_{21}$ and $d_{13}$) is finitely generated. We sum up the results of these calculations in the following table (where a character $\chi$ is given as $(\chi(id), \chi((12)), \chi((123)))$) :
\[
\begin{array}{c|c|c|c|c}
k & 6 & 7 & 8 & 9\\
\hline
\text{character of }\kappa_k(\mathrm{P}\Sigma_3) & (1, -1, 1) & (6,0,0)  & (24, -2,0) & (92, 0, 2)
\end{array}
\]

\subsection*{Acknowledgements}
The second author is supported by JSPS KAKENHI Grant Number 18K03204.
The third author is supported by JSPS KAKENHI Grant Number 22K03299.


\begin{thebibliography}{ABCD}
 \bibitem{And} S. Andreadakis; On the automorphisms of free groups and free nilpotent groups, Proc. London Math. Soc. (3) 15 (1965), 239--268.
 \bibitem{Art} E. Artin; Theorie der {Z}\"{o}pfe, Abh. Math. Sem. Univ. Hamburg 4 (1925), no.~1, 47--72.
 \bibitem{Ba} S. Bachmuth; Induced automorphisms of free groups and free metabelian groups, Trans. Amer. Math. Soc. 122 (1966), 1--17.
 \bibitem{BP} B. Berceanu and S. Papadima; Universal representations of braid and braid permutation groups, J. Knot Theory Ramifications 18 (2009), no.7, 999--1019
 \bibitem{LB1} L. Bartholdi; Automorphisms of free groups I, New York Journal of Mathematics 19 (2013), 395--421.
 \bibitem{LB2} L. Bartholdi; Erratum; Automorphisms of free groups I, New York Journal of Mathematics 22 (2016), 1135--1137.
 \bibitem{Bou} N. Bourbaki; Lie Groups and Lie Algebras, Chapters 1-3, Springer-Verlag (1989).
 \bibitem{CP1} F. Cohen and J. Pakianathan; On Automorphism Groups of Free Groups, and Their Nilpotent Quotients, preprint.
 \bibitem{CP2} F. Cohen and J. Pakianathan; On subgroups of the automorphism group of a free group and associated graded Lie algebras, preprint.
 \bibitem{Da} C. Damiani; A journey through loop braid groups, Expositiones Mathematicae, {\bf 35} (2017), no.~3, 252--285.
 \bibitem{Da1} J. Darn\'{e}; On the stable Andreadakis Problem, J. Pure and Applied Algebra 223 (2019), 5484-5525.
 \bibitem{Da2} J. Darn\'{e}; On the Andreadakis problem for subgroups of $IA_n$, Int. Math. Res. Not. IMRN (2021), no.~19, 14720--14742.
 \bibitem{Da3} J. Darn\'{e}; Braids, inner automorphisms and the Andreadakis problem. Annales de l'Institut Fourier, Online first, 43 p.
 \bibitem{ES} N. Enomoto and T. Satoh; On the structures of the Johnson cokernels of the basis-conjugating automorphism groups of free groups, arXiv:2403.04286.
 \bibitem{FR} M. Falk and R. Randell; The lower central series of a fiber-type arrangement, Invent. Math. 82 (1985), no.~1, 77--88.
 \bibitem{Fa}  B. Farb; Automorphisms of $F_n$ which act trivially on homology, in preparation.
 \bibitem{Ib} A. Ibrahim; On the Andreadakis equality for a subgroup of the McCool group, arXiv:1902.10033.
 \bibitem{Jo}  D. Johnson; The structure of the Torelli group III: The abelianization of $\mathcal{I}$, Topology 24 (1985), 127-144.
 \bibitem{Ka}  N. Kawazumi; Cohomological aspects of Magnus expansions, preprint, {\texttt{arXiv:math.GT/0505497}}.
 \bibitem{MS} G. Massuyeau and T. Sakasai, Morita's trace maps on the group of homology cobordisms, J. Topol. Anal. {\bf 12} (2020), no.~3, 775--818.
 \bibitem{McC} J. McCool; On basis-conjugating automorphisms of free groups, Can. J. Math., XXXVIII, No. 6 (1986), 1525--1529.
 \bibitem{MP} V. Metaftsis and A.~I. Papistas; On the McCool group $M_3$ and its associated Lie algebra, Comm. Algebra {\bf 45} (2017), no.~3, 1076--1104.
 \bibitem{Mo}  S. Morita; On the structure of the Torelli group and the casson invariant, Topology 30 (1991), 603--621.
 \bibitem{S1} T. Satoh; On the basis-conjugating automorphism groups of free groups and free metabelian groups,
               Math. Proc. Camb. Phil. Soc. 158 (2015), 83--109.
 \bibitem{S2}  T. Satoh; The third subgroup of the Andreadakis-Johnson filtration of the automorphism group of a free group,
                J. Group Theory 22 (2019), 41--61.
\end{thebibliography}
\end{document}